\documentclass[11pt]{article}
\usepackage{amsmath}
\usepackage{fancyvrb}
\usepackage[all]{xy}
\usepackage{tabularx}
\usepackage{amscd}
\usepackage{mhchem}
\usepackage{amsthm}
\usepackage{amsfonts}

\usepackage{hyperref}
\usepackage{amssymb}
\usepackage{graphicx}

\def\Hom{\mathop{\rm Hom}\nolimits}
\def\Ext{\mathop{\rm Ext}\nolimits}

\def\mod{\mathop{\rm mod}\nolimits}

\def\End{\mathop{\rm End}\nolimits}
\def\tilt{\mathop{\rm tilt}\nolimits}

\def\proj{\mathop{\rm proj}\nolimits}

\def\silt{\mathop{\rm silt}\nolimits}

\textheight 245mm \textwidth 160mm \oddsidemargin 0pt
\evensidemargin 0pt \topmargin -50pt
\usepackage{ulem}
\begin{document}

\newcommand{\nc}{\newcommand}
\def\PP#1#2#3{{\mathrm{Pres}}^{#1}_{#2}{#3}\setcounter{equation}{0}}
\def\ns{$n$-star}\setcounter{equation}{0}
\def\nt{$n$-tilting}\setcounter{equation}{0}
\def\Ht#1#2#3{{{\mathrm{Hom}}_{#1}({#2},{#3})}\setcounter{equation}{0}}
\def\qp#1{{${(#1)}$-quasi-projective}\setcounter{equation}{0}}
\def\mr#1{{{\mathrm{#1}}}\setcounter{equation}{0}}
\def\mc#1{{{\mathcal{#1}}}\setcounter{equation}{0}}
\def\HD{\mr{Hom}_{\mc{D}}}
\def\HC{\mr{Hom}_{\mc{C}}}
\def\AdT{\mr{Add}_{\mc{T}}}
\def\adT{\mr{add}_{\mc{T}}}
\def\Kb{\mc{K}^b(\mr{Proj}R)}
\def\kb{\mc{K}^b(\mc{P}_R)}
\def\AdpC{\mr{Adp}_{\mc{C}}}

\newtheorem{theorem}{Theorem}[section]
\newtheorem{proposition}[theorem]{Proposition}
\newtheorem{lemma}[theorem]{Lemma}
\newtheorem{corollary}[theorem]{Corollary}
\newtheorem{conjecture}[theorem]{Conjecture}
\newtheorem{question}[theorem]{Question}
\newtheorem{definition}[theorem]{Definition}
\newtheorem{example}[theorem]{Example}

\newtheorem{remark}[theorem]{Remark}

\def\Pf#1{{\noindent\bf Proof}.\setcounter{equation}{0}}
\def\>#1{{ $\Rightarrow$ }\setcounter{equation}{0}}
\def\<>#1{{ $\Leftrightarrow$ }\setcounter{equation}{0}}
\def\bskip#1{{ \vskip 20pt }\setcounter{equation}{0}}
\def\sskip#1{{ \vskip 5pt }\setcounter{equation}{0}}

\def\bg#1{\begin{#1}\setcounter{equation}{0}}
\def\ed#1{\end{#1}\setcounter{equation}{0}}
\def\KET{T^{^F\bot}\setcounter{equation}{0}}
\def\KEC{C^{\bot}\setcounter{equation}{0}}
\renewcommand{\thefootnote}{\fnsymbol{footnote}}
\setcounter{footnote}{0}

\title{\bf Extending silted algebras to cluster-tilted algebras 
\thanks{This work was partially supported by NSFC (Grant No. 11971225). } }
\footnotetext{
E-mail:~hpgao07@163.com}
\smallskip
\author{\small Hanpeng Gao \thanks{Corresponding author.}\\
{\it \footnotesize Department of Mathematics, Nanjing University, Nanjing 210093, P.R. China
}\\
{\it \footnotesize Department of Mathematics,  University of Connecticut, Storrs 06269, USA}}

\date{}
\maketitle
\baselineskip 15pt\begin{abstract}
It  is well known that the relation-extensions of  tilted algebras are cluster-tilted algebras.  In this paper, we extend the result to silted algebras and prove some extension
 of silted algebras are cluster-tilted algebras. 
\vspace{10pt}

\noindent {\it 2010 Mathematics Subject Classification}: 16G20, 13F60.


\noindent {\it Keywords}: Silted algebras, Cluster-tilted algebras, Tilted algebras, Relation-extension.

\end{abstract}

\vskip 30pt

\section{Introduction}
Cluster-tilted algebras were introduced by Buan, Marsh, Reiten and etl. \cite{BMRRT} , and also in \cite{CCS} for type $\mathbb{A}$.  Let $A$ be a triangular algebra whose global dimension is at most two over an algebraically closed field $k$. The trivial extension of $A$ by the $A$-$A$-bimodule $\Ext^{2}_A(DA,A)$ is called the $relation$-$extension$\cite{ABS} of $A$, where $D$=$\Hom_k(-,k)$ is the standard duality. It is proved that the relation-extension of every tilted algebra is cluster-tiled , and every cluster-tilted algebra is of this form in \cite{ABS}.

The concept of silting complexes originated from \cite{KV} and    2-term silting complexes are of particular interest and important for the representation of algebra. 
 In \cite{BZ2018}, the endomorphism algebras of 2-term silting complexes were introduced by Buan and Zhou.  They also defined the concept of  the silted algebra \cite{BZ2016} which is the endomorphism algebras of 2-term silting complex over the derived category of hereditary algebras and proved that an algebra is silted if and only if it is  shod \cite{CL1999}( projective dimension or injective dimension of every indecomposable module is at most one).  In particular, tilted algebras are silted, indeed,   the minimal projective presentation of a tilting module $T$ over the hereditary algebra $H$ gives rise to a 2-term silting complex  $\mathbf{ P}$ in $\mathrm{K}^b(\proj H)$,  and. that there is an isomorphism of algebras $\End_H(T ) \cong \End_{\mathcal{D}^b(H)}(\mathbf{ P })$.

As a generalization of tilting modules, support $\tau$-tilting modules were introduced by Adachi, Iyama and Reiten \cite{AIR}. They also shown that there is a bijection between support $\tau$-tilting modules and 2-term silting complexes(see, \cite[Theorem 3.2]{AIR}).   This result provided that every silted algebra can be described as the triangular matrix algebra  $\left(\begin{array}{cc} B & 0 \\
M & H_1\end{array}\right)$ where $B$ is a tilted algebra, $H_1$ is a hereditary algebra and $M$ is a $H_1$-$B$-bimodule (see, Proposition \ref{3.2}). It is a natural question whether silted algebras can be extended to cluster-tilted algebras.

In this paper, we give a positive answer and  construct   cluster-tilted algebras from silted algebras.   We call a silted algebra $A$ with respect to  $(T, P)$ for some hereditary algebra $H$ if there exists a 2-term silting complex $\mathbf{ P}$ in $\mathcal{D}^b(H)$ which corresponding to the support $\tau$-tilting pair  $(T,P)$ in $\mod H$ such that $A\cong \End_{\mathcal{D}^b(H)}(\mathbf{ P })$.
Our main results as follows.
\begin{theorem}
Let $A$=$\left(\begin{array}{cc} B & 0 \\
M & H_1\end{array}\right)$ be a silted algebra with respect to  $(T, P)$ for some hereditary algebra $H$. Then the matrix algebra $\left(\begin{array}{cc} B\ltimes\Ext^1_H(T,\tau^{-1}_HT) & \Hom_H(P, \tau^{-1}_H T) \\
 M & H_1\end{array}\right)$  is a cluster-tilted algebra.
\end{theorem}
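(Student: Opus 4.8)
The plan is to realize the target matrix algebra $\Lambda$ as the endomorphism algebra of a cluster-tilting object in the cluster category $\mathcal{C}_H=\mathcal{D}^b(H)/\tau^{-1}_H[1]$, so that it is cluster-tilted by the very definition of \cite{BMRRT}. Write $\mathbf{P}=\mathbf{P}_T\oplus P[1]$, where $\mathbf{P}_T\colon P_1\to P_0$ is the minimal projective presentation of $T$ and $P[1]$ is the shift of the projective summand determined by the pair $(T,P)$. Since $H$ is hereditary, $T$ has projective dimension at most one, so $\mathbf{P}_T$ is quasi-isomorphic to $T$ in $\mathcal{D}^b(H)$; hence $\End_{\mathcal{D}^b(H)}(\mathbf{P}_T)=\End_H(T)=B$, $\End_{\mathcal{D}^b(H)}(P[1])=\End_H(P)=H_1$, and $\Hom_{\mathcal{D}^b(H)}(\mathbf{P}_T,P[1])=\Ext^1_H(T,P)=M$, which recovers the triangular description of $A$ from Proposition \ref{3.2}.

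Next I would pass to $\mathcal{C}_H$ and use the standard formula $\Hom_{\mathcal{C}_H}(X,Y)=\Hom_{\mathcal{D}^b(H)}(X,Y)\oplus\Hom_{\mathcal{D}^b(H)}(X,\tau^{-1}_HY[1])$ for objects $X,Y$ in the fundamental domain $\mod H\sqcup(\proj H)[1]$, together with the $2$-Calabi--Yau property of $\mathcal{C}_H$. The main structural step is to check that the image $\overline{\mathbf{P}}=\overline{T}\oplus\overline{P[1]}$ is a cluster-tilting object. On one hand, $\overline{\mathbf{P}}$ has exactly $|H|$ pairwise non-isomorphic indecomposable summands, because $\mathbf{P}$ is a basic $2$-term silting complex and the orbit functor keeps indecomposable modules and shifted indecomposable projectives distinct in $\mathcal{C}_H$. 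On the other hand, $\Ext^1_{\mathcal{C}_H}(\overline{\mathbf{P}},\overline{\mathbf{P}})=0$: the summand $\overline{T}$ is rigid because $\tau$-rigidity of $T$ gives $\Hom_H(T,\tau_H T)=0$, whence $\Ext^1_H(T,T)=0$ by the Auslander--Reiten formula; the cross terms vanish since $\Hom_H(P,T)=0$ (part of the definition of the support $\tau$-tilting pair), $P$ is projective, and $\Ext^{\ge 2}_H=0$, the remaining cross term being handled by the $2$-Calabi--Yau symmetry; and $\overline{P[1]}$ is rigid as $P$ is projective. A rigid object with $|H|$ indecomposable summands in $\mathcal{C}_H$ is cluster-tilting, so $\End_{\mathcal{C}_H}(\overline{\mathbf{P}})$ is a cluster-tilted algebra.

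It then remains to compute $\End_{\mathcal{C}_H}(\overline{\mathbf{P}})$ block by block and to identify it with $\Lambda$. The Hom-formula gives $\Hom_{\mathcal{C}_H}(\overline{T},\overline{T})=\End_H(T)\oplus\Ext^1_H(T,\tau^{-1}_HT)$, $\Hom_{\mathcal{C}_H}(\overline{P[1]},\overline{P[1]})=\End_H(P)=H_1$, $\Hom_{\mathcal{C}_H}(\overline{T},\overline{P[1]})=\Ext^1_H(T,P)=M$ (the twisted component vanishing by $\Ext^{\ge 2}_H=0$), and, crucially, $\Hom_{\mathcal{C}_H}(\overline{P[1]},\overline{T})=\Hom_H(P,\tau^{-1}_HT)$, the latter arising from the $\tau^{-1}_H[1]$-twisted component while the untwisted component $\Hom_{\mathcal{D}^b(H)}(P[1],T)$ vanishes. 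This reproduces all four entries of $\Lambda$. For the top-left entry one checks that $\Ext^1_H(T,\tau^{-1}_HT)$ is a square-zero ideal: the composite of two twisted morphisms factors through the $(\tau^{-1}_H[1])^2$-component, which lands in $\Ext^2_H(T,\tau^{-2}_HT)=0$; hence the algebra structure there is exactly the trivial extension $B\ltimes\Ext^1_H(T,\tau^{-1}_HT)$.

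The step I expect to require the most care is verifying that the matrix multiplication on $\Lambda$ coincides with composition in $\End_{\mathcal{C}_H}(\overline{\mathbf{P}})$, that is, that the $B$-, $H_1$-bimodule actions on $M$, on $\Hom_H(P,\tau^{-1}_HT)$ and on $\Ext^1_H(T,\tau^{-1}_HT)$, together with the multiplication maps between the off-diagonal pieces, are precisely the ones induced by the composition products in $\mathcal{C}_H$; establishing this multiplicative identification, rather than the underlying additive bijection, is the technical heart of the argument. A secondary point to pin down is that the orbit functor does not collapse any summand of $\mathbf{P}$, so that $\overline{\mathbf{P}}$ genuinely has $|H|$ summands and is cluster-tilting.
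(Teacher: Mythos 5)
Your proposal is correct and follows essentially the same route as the paper: both realize the target algebra as $\End_{\mathcal{C}_H}(\tilde{T}\oplus\tilde{P}[1])$, compute this endomorphism algebra block by block via the orbit-category Hom formula (using that $\Hom_{\mathcal{D}^b(H)}(X,Y[i])=0$ for $i\geq 2$ over hereditary $H$), and identify the top-left corner with the trivial extension $B\ltimes\Ext^1_H(T,\tau^{-1}_HT)$ by the lemma of Assem--Br\"ustle--Schiffler. The only difference is one of detail: the paper simply asserts that $\tilde{T}\oplus\tilde{P}[1]$ is a cluster-tilting object (relying on the known correspondence between $2$-term silting complexes and cluster-tilting objects), whereas you verify this directly via rigidity and the summand count, which is a welcome but not structurally different addition.
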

As a consequence, we have the following result.
\begin{theorem}
Let $A$=$\left(\begin{array}{cc} B & 0 \\
M & H_1\end{array}\right)$ be a silted algebra with respect to  $(T, P)$ for some hereditary algebra $H$. If   $\Hom_H(P,\tau^{-1}_HT)=0$, then the triangular matrix algebra $\left(\begin{array}{cc} B\ltimes\Ext^2_B(DB,B) & 0 \\
M & H_1\end{array}\right)$ is a cluster-tilted algebra.
\end{theorem}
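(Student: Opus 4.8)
The plan is to obtain Theorem 1.2 as a direct specialization of Theorem 1.1: I would feed the same data $(T,P)$ into Theorem 1.1, use the vanishing hypothesis to reduce the resulting cluster-tilted algebra to triangular form, and then rewrite its diagonal corner as a relation-extension. Concretely, Theorem 1.1 produces the cluster-tilted algebra
$$\Lambda=\left(\begin{array}{cc} B\ltimes\Ext^1_H(T,\tau^{-1}_HT) & \Hom_H(P,\tau^{-1}_HT) \\ M & H_1\end{array}\right),$$
and the assumption $\Hom_H(P,\tau^{-1}_HT)=0$ annihilates the upper right corner, so that $\Lambda$ collapses to the lower triangular matrix algebra with diagonal corners $B\ltimes\Ext^1_H(T,\tau^{-1}_HT)$ and $H_1$ and off-diagonal bimodule $M$.

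Since $\Lambda$ is already known to be cluster-tilted, it remains to give an algebra isomorphism from this triangular algebra to the one in Theorem 1.2, and this amounts to the single $B$-$B$-bimodule isomorphism $\Ext^1_H(T,\tau^{-1}_HT)\cong\Ext^2_B(DB,B)$. The tool is the relation-extension description of cluster-tilted algebras of Assem--Br\"ustle--Schiffler: writing $B=\End_{\bar H}(T)$ as a tilted algebra over a hereditary algebra $\bar H$ on which $T$ is a tilting module, its relation-extension $B\ltimes\Ext^2_B(DB,B)$ is the cluster-tilted algebra $\End_{\mathcal{C}_{\bar H}}(T)\cong B\ltimes\Ext^1_{\bar H}(T,\tau^{-1}_{\bar H}T)$, the last isomorphism coming from the computation of morphism spaces in the cluster category $\mathcal{C}_{\bar H}$. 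Because the resulting bimodule isomorphism restricts to the identity on the common subalgebra $B$, it is compatible with the right $B$-action on $M$ (the square-zero ideals $\Ext^2_B(DB,B)$ and $\Ext^1_H(T,\tau^{-1}_HT)$ annihilate $M$ in the two trivial extensions). Hence it extends entrywise, via $(x,m,h)\mapsto(\varphi(x),m,h)$, to an isomorphism of the two triangular matrix algebras, and Theorem 1.2 follows.

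The step I expect to be the main obstacle is justifying the bimodule isomorphism inside the present support $\tau$-tilting framework, that is, reconciling the extension group $\Ext^1_H(T,\tau^{-1}_HT)$ formed over the ambient hereditary algebra $H$ (as it appears in Theorem 1.1) with the relation bimodule of $B$, which the Assem--Br\"ustle--Schiffler theorem phrases over the hereditary model $\bar H$. This is where the hypothesis $\Hom_H(P,\tau^{-1}_HT)=0$ should do work beyond merely deleting the corner: passing to the hereditary quotient $\bar H=H/\langle e\rangle$ determined by the support idempotent $e$ of $P$ realizes $T$ as an honest tilting $\bar H$-module with $B=\End_{\bar H}(T)$, and the vanishing $\Hom_H(P,\tau^{-1}_HT)=0$ is what guarantees that the contributions of the deleted projectives do not affect the Auslander--Reiten translate, so that $\Ext^1_H(T,\tau^{-1}_HT)$ and $\Ext^1_{\bar H}(T,\tau^{-1}_{\bar H}T)$ coincide and the two Ext computations match.
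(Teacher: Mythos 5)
Your proposal is correct and follows essentially the same route as the paper's proof: specialize Theorem 1.1 so the vanishing hypothesis kills the off-diagonal corner, pass to the hereditary quotient $H'=H/HeH$ (using $\Hom_H(P,\tau^{-1}_HT)=0$ to see $\tau^{-1}_HT$ is an $H'$-module, so $\Ext^1_H(T,\tau^{-1}_HT)\cong\Ext^1_{H'}(T,\tau^{-1}_{H'}T)$), and then identify this bimodule with $\Ext^2_B(DB,B)$. The only difference is cosmetic: where you cite the Assem--Br\"ustle--Schiffler relation-extension theorem for that last identification, the paper re-derives it explicitly via Happel's derived equivalence $\mathrm{RHom}_{H'}(T,-)$, which is precisely the computation underlying the ABS result you invoke.
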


Note that a tilted algebra is exactly silted algebra with respect to  $(T, 0)$ for some hereditary algebra $H$, we can easy get the relation-extension of every tilted algebra is cluster-tilted .

Throughout this paper, all algebras are finite dimensional algebras over an algebraically closed field $k$. For an algebra $A$, we denoted by $\mod A$ the category of finitely generated right $A$-modules and $\proj A$  the category of finitely generated projective right $A$-modules.  $\mathrm{K}^b(\proj A)$ will stand for  the bounded homotopy category of finitely generated projective right $A$-modules and ${\mathcal{D}^b(A)}$ is the bounded derived category of finitely generated  right $A$-modules. For a $A$-module $M$, $|M|$ is the number of pairwise non-isomorphic direct summands of $M$.  All modules
considered  basic.

\vspace{0.5cm}
\section{Preliminaries}
\subsection{Tilted algebras}

Let $A$ be an algebra.  An $A$-module $T$ is called $tilting$ if $(1)$ the projective dimension of  $T$  is at most one, $(2)$$\Ext^1_A(T,T) = 0$ and $(3)$ $|T|=|A|$. The endomorphism algebra of a tilting module over a hereditary algebra is called a $tilted$ algebra \cite{HR}.  The following result is very useful.
\begin{theorem}{\rm \cite{H1988}}
Let $H$ be a hereditary algebra, $T$ a tilting $H$-module and $B$=$\End_H(T)$ the corresponding tilted algebra. Then we have

\begin{enumerate}
\item[(1)] The derived functor $\mathrm{RHom}_H(T,-): \mathcal{D}^b(H) \to \mathcal{D}^b(B)$ is an equivalence  which maps  $T$ to $B$.
\item[(2)] $\mathrm{RHom}_H(T,-)$ commutes with the Auslander–Reiten translations and the shifts in the respective categories.
\end{enumerate}
\end{theorem}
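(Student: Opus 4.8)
The plan is to realise a projective resolution of $T$ as a tilting complex over $H$, apply the Morita theory for derived categories to produce the equivalence in (1), and then obtain (2) formally from the way triangle equivalences interact with Serre functors. Since $H$ is hereditary we have $\pd_H T\le 1$, so a minimal projective presentation $0\to P_1\to P_0\to T\to 0$ gives a two-term complex $\mathbf{P}=(P_1\to P_0)$ in $\mathrm{K}^b(\proj H)$ that is quasi-isomorphic to $T$ in $\mathcal{D}^b(H)$. The first step is the computation
\[
\Hom_{\mathcal{D}^b(H)}(\mathbf{P},\mathbf{P}[i])\cong\Ext^i_H(T,T),
\]
which equals $\End_H(T)=B$ for $i=0$ and vanishes for every $i\neq 0$: for $i=1$ by the hypothesis $\Ext^1_H(T,T)=0$, for $i\ge 2$ because $\pd_H T\le 1$, and trivially for $i<0$. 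Thus $\mathbf{P}$ is a presilting complex with endomorphism ring $B$.

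Next I would verify that $\mathbf{P}$ generates. The tilting axioms guarantee an $\add T$-coresolution $0\to H\to T^0\to T^1\to 0$ with $T^0,T^1\in\add T$, and the associated triangle places $H$—hence every object of $\mathcal{D}^b(H)=\mathrm{K}^b(\proj H)$—in the thick subcategory generated by $T$. So $\mathbf{P}$ is a tilting complex, and by Rickard's theorem its existence, together with $\End_{\mathcal{D}^b(H)}(\mathbf{P})\cong B$, produces a triangle equivalence $\mathcal{D}^b(H)\xrightarrow{\sim}\mathcal{D}^b(B)$. I would then identify this equivalence with $\mathrm{RHom}_H(T,-)$: regarding $T$ as a $B$-$H$-bimodule, $\mathrm{RHom}_H(T,-)$ is right adjoint to $-\otimes^{L}_B T$, it sends $T\simeq\mathbf{P}$ to $\mathrm{RHom}_H(T,T)=B$, and its full faithfulness and density follow from the orthogonality and generation just established. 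This gives (1).

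For (2), both $H$ and the tilted algebra $B$ have finite global dimension, so $\mathcal{D}^b(H)$ and $\mathcal{D}^b(B)$ carry Serre functors $\nu_H,\nu_B$, and in each the Auslander–Reiten translation is $\tau=\nu[-1]$. Every triangle equivalence commutes with the shift and transports the Serre functor up to natural isomorphism, so $\mathrm{RHom}_H(T,-)\circ\nu_H\cong\nu_B\circ\mathrm{RHom}_H(T,-)$ and therefore $\mathrm{RHom}_H(T,-)$ commutes with $\tau$; this is exactly (2).

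The step I expect to be the main obstacle is the identification of the abstract equivalence provided by Rickard's theorem with the concrete functor $\mathrm{RHom}_H(T,-)$, and the verification that the latter is fully faithful: Rickard's result only guarantees that some equivalence exists, so the genuine work lies in combining the adjunction $-\otimes^{L}_B T\dashv\mathrm{RHom}_H(T,-)$ with the Ext-orthogonality of $\mathbf{P}$ and the generation of $\mathcal{D}^b(H)$ by $T$ to show that the unit and counit of this adjunction are isomorphisms.
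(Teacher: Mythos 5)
The paper does not actually prove this statement: it is quoted directly from Happel's book \cite{H1988}, so the only comparison available is with the classical proof it cites. Your route (realize $T$ as a two-term complex in $\mathrm{K}^b(\proj H)$, check Ext-orthogonality and generation, obtain an equivalence, and deduce (2) from Serre-functor formalism) is the standard modern reconstruction, and most of it is sound: the computation $\Hom_{\mathcal{D}^b(H)}(\mathbf{P},\mathbf{P}[i])\cong\Ext^i_H(T,T)$, the generation argument via the coresolution $0\to H\to T^0\to T^1\to 0$, and the treatment of (2) via uniqueness of Serre functors and $\tau=\nu[-1]$ are all correct (the last granted finite global dimension of both algebras).

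There is, however, one step that fails as written: you assert that density of $\mathrm{RHom}_H(T,-)$ ``follows from the orthogonality and generation just established.'' It does not. Orthogonality and generation of $T$ in $\mathcal{D}^b(H)$ give full faithfulness (d\'evissage from the generator, or equivalently the counit of $-\otimes^L_B T\dashv\mathrm{RHom}_H(T,-)$ being an isomorphism on $\mathrm{thick}(T)=\mathcal{D}^b(H)$), and they show that the essential image is the thick subcategory of $\mathcal{D}^b(B)$ generated by $B$, namely $\mathrm{K}^b(\proj B)$. To conclude that this is all of $\mathcal{D}^b(B)$ you need $\mathrm{thick}(B)=\mathcal{D}^b(B)$, i.e.\ that $B$ has finite global dimension --- a fact about $B$, not about $T$, which must be imported separately (Happel--Ringel: $\mathrm{gl.dim}\,B\le 2$ for tilted algebras, proved module-theoretically and independently of any derived equivalence). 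You do invoke finiteness of $\mathrm{gl.dim}\,B$ in part (2), but part (1) already needs it at exactly this point. A second, smaller issue: Rickard's theorem as you use it is simultaneously redundant and insufficient --- redundant because once the unit/counit d\'evissage is carried out you have the equivalence directly without Rickard; insufficient because plain Rickard only produces \emph{some} equivalence, and identifying it with $\mathrm{RHom}_H(T,-)$ requires the standardness theorem (Rickard, \emph{Derived equivalences as derived functors}), a genuinely harder result. You flag this honestly, but the cleanest repair is to drop Rickard altogether: run the d\'evissage for the counit on $\mathrm{thick}(T)=\mathcal{D}^b(H)$ and for the unit on $\mathrm{thick}(B)$, then cite $\mathrm{gl.dim}\,B\le 2$ to identify $\mathrm{thick}(B)$ with $\mathcal{D}^b(B)$.
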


\subsection{Silted algebras}

\begin{definition}\label{2.3} {\rm (\cite[Definition 0.1]{AIR})}
Let $T\in\mod A$.
\begin{enumerate}
\item[(1)] $T$ is called {\it $\tau$-rigid} if $\Hom_A(T,\tau T)=0$.
\item[(2)] $T$ is called {\it $\tau$-tilting}  if it is $\tau$-rigid and $|T|=|A|$.
\item[(3)] $T$ is called {\it support $\tau$-tilting} if it is a $\tau$-tilting $A/A eA$-module
for some idempotent $e$ of $A$.
\end{enumerate}
\end{definition}

Sometimes, it is convenient to view support $\tau$-tilting modules and $\tau$-rigid modules as
certain pairs of modules in $\mod A$.

\begin{definition} \label{2.5} {\rm (\cite[Definition 0.3]{AIR})}
Let $(T,P)$ be a pair in $\mod A$ with $P\in \proj A$.
\begin{enumerate}
\item[(1)] $(T, P)$ is called a {\it $\tau$-rigid pair} if $M$ is $\tau$-rigid and $\Hom_A(T,M)=0$.
\item[(2)] $(T, P)$ is called a {\it support $\tau$-tilting pair} if $T$ is $\tau$-rigid and $|T|+|P|=|A|$.
\end{enumerate}
\end{definition}

It was showed in \cite[Proposition 2.3]{AIR} that $(T,P)$ is a support $\tau$-tilting pair in
$\mod A$ if and only if $T$ is a $\tau$-tilting $A/AeA$-module with $eA\cong P$.

 Let $\mathbf{ P}$ be a complex in $\mathrm{K}^b(\proj A)$.  Recall that  $\mathbf{ P}$  is  $silting$ \cite{KV} if $\Hom_{\mathrm{K}^b(\proj A)}(\mathbf{ P},\mathbf{ P}[i]) = 0$ for $i > 0$, and if $\mathbf{ P}$ generates 
 $\mathrm{K}^b(\proj A)$ as a triangulated category. Moreover,  $\mathbf{ P}$ is  called 2-term  if it only has non-zero terms in degree $0$ and $-1$.
 
 The next result show that the relationship between support $\tau$-tilting modules and 2-term silting complexes. For convenience, we denote by $s\tau$-$\tilt A$ all support $\tau$-tilting modules over the algebra $A$ and $2$-$\silt A$  all 2-term silting complexes over $\mathrm{K}^b(\proj A)$ . 
 \begin{theorem}{\rm \cite[theorem 3.2]{AIR}}
 There exists a bijection between $s\tau$-$\tilt A$ and $2$-$\silt A$ given by $(T,P)\in s\tau$-$\tilt A\to P_1\oplus P\to P_0\in2$-$\silt A$ and $\mathbf{ P}\in 2$-$\silt A\to \mathrm{H}^0(\mathbf{ P})\in s\tau$-$\tilt A$, where $P_1\to P_0$ is a minimal projective presentation of $T$. 
 
  \end{theorem}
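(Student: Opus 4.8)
The plan is to realize the two assignments as mutually inverse maps, the technical core being an Auslander--Reiten type identity that reads the presilting condition off the cohomology. First I would fix the forward map: given a support $\tau$-tilting pair $(T,P)$, choose a minimal projective presentation $P_1\xrightarrow{d}P_0\to T\to 0$ and let $\mathbf{P}_{(T,P)}$ be the $2$-term complex $P_1\oplus P\xrightarrow{(d,\,0)}P_0$ in degrees $-1,0$; the backward map sends $\mathbf{P}=(\mathbf{P}^{-1}\xrightarrow{\partial}\mathbf{P}^0)$ to $\mathrm{H}^0(\mathbf{P})=\Coker\partial$. Since $\Coker(d,0)=\Coker d=T$, one has $\mathrm{H}^0(\mathbf{P}_{(T,P)})=T$ at once, so the real content is (i) that both maps land where claimed and (ii) that the split summand $P$ can be recovered, which I treat last.

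Second, and this is the heart of the argument, I would prove the homological dictionary: for $X,Y\in\mod A$ with minimal presentations and associated complexes $\mathbf{P}_X,\mathbf{P}_Y$, there is a natural isomorphism $\Hom_{\mathrm{K}^b(\proj A)}(\mathbf{P}_X,\mathbf{P}_Y[1])\cong D\Hom_A(Y,\tau X)$. Concretely, a chain map $\mathbf{P}_X\to\mathbf{P}_Y[1]$ is just a map $P_1^X\to P_0^Y$ modulo those factoring through $d_X$ or $d_Y$; identifying $X\otimes_A P^{*}\cong\Hom_A(P,X)$ for projective $P$ (with $P^{*}=\Hom_A(P,A)$) and writing $\tau=D\,\mathrm{Tr}$ turns this quotient into $D\Hom_A(Y,\tau X)$, where minimality of the presentations is exactly what makes the right-hand side an honest $\Hom$. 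Specializing to $X=Y$ gives the key equivalence: $\mathbf{P}_X$ is presilting $\iff\Hom_A(X,\tau X)=0\iff X$ is $\tau$-rigid.

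Third I would assemble this into the pair statement. Decomposing $\Hom_{\mathrm{K}^b(\proj A)}(\mathbf{P}_{(T,P)},\mathbf{P}_{(T,P)}[1])$ along $P_1\oplus P$, the only surviving cross term is $\Hom_{\mathrm{K}^b(\proj A)}(P[1],\mathbf{P}_T[1])\cong\Hom_A(P,T)$, while $\Hom_{\mathrm{K}^b(\proj A)}(\mathbf{P}_T,P[2])$ and the $P$--$P$ term vanish for degree reasons; hence $\mathbf{P}_{(T,P)}$ is presilting $\iff T$ is $\tau$-rigid and $\Hom_A(P,T)=0$, i.e.\ $\iff(T,P)$ is a $\tau$-rigid pair. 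To upgrade presilting to silting I would use the standard fact that every silting complex has exactly $|A|$ indecomposable summands --- their classes form a basis of $K_0(\mathrm{K}^b(\proj A))$, of rank $|A|$ --- together with the completion lemma that a $2$-term presilting complex with $|A|$ summands is maximal presilting, hence silting. Since $\mathbf{P}_{(T,P)}$ has $|T|+|P|$ indecomposable summands, it is silting $\iff|T|+|P|=|A|\iff(T,P)$ is support $\tau$-tilting, which gives well-definedness of both maps.

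Finally, for bijectivity I would check that the maps invert one another. Any $\mathbf{P}\in2$-$\silt A$ may be normalized in $\mathrm{K}^b(\proj A)$ by splitting off all contractible summands $(Q\xrightarrow{\cong}Q)$ and collecting the summands of the form $(P\xrightarrow{0}0)$; the remaining differential is then a minimal projective presentation of $\mathrm{H}^0(\mathbf{P})=T$, and $(T,P)$ is the support $\tau$-tilting pair attached to $\mathbf{P}$, so the two constructions are inverse. Passing through the correspondence between support $\tau$-tilting pairs and support $\tau$-tilting modules (quoted above from \cite[Proposition 2.3]{AIR}) yields the stated bijection. The main obstacle is the natural isomorphism of the second step: getting it precisely --- in particular the passage through $\mathrm{Tr}$ and the role of minimality --- is the delicate computation, with the completion/counting input a second substantial ingredient, while the remaining steps are bookkeeping resting on these.
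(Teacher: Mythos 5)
You are proving a statement that this paper does not actually prove: Theorem 2.4 here is quoted directly from Adachi--Iyama--Reiten \cite[Theorem 3.2]{AIR}, with no argument supplied, so the only meaningful benchmark is the proof in \cite{AIR} itself. Measured against that, your proposal reconstructs essentially the standard argument, and its technical core is correct. The computation of $\Hom_{\mathrm{K}^b(\proj A)}(\mathbf{P}_X,\mathbf{P}_Y[1])$ as maps $P_1^X\to P_0^Y$ modulo the sum of the two subgroups of maps factoring through $d_X$ and through $d_Y$, and its identification (via $Y\otimes_A\Hom_A(P,A)\cong\Hom_A(P,Y)$, right-exactness of the tensor, and $\tau=D\,\mathrm{Tr}$) with $D\Hom_A(Y,\tau X)$, is exactly the mechanism in \cite{AIR}; you are also right that it is the plain $\Hom$, not the stable one, that appears, and that minimality of the presentation is what makes $\mathrm{Tr}$ the honest transpose. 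The four-term decomposition of the endomorphisms of $\mathbf{P}_T\oplus P[1]$, with only the cross term $\Hom_{\mathrm{K}^b(\proj A)}(P[1],\mathbf{P}_T[1])\cong\Hom_A(P,T)$ surviving, correctly yields presilting $\iff$ $\tau$-rigid pair, and the normalization of a 2-term complex into (minimal presentation complex) $\oplus$ (stalk $P[1]$) $\oplus$ (contractible part) is the right way to see the maps are mutually inverse.

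Two caveats. First, your sketch is modular rather than self-contained: the two facts you black-box --- that a silting complex has exactly $|A|$ indecomposable summands, and that a 2-term presilting complex with $|A|$ summands is silting --- are substantial theorems (Aihara--Iyama; \cite[Proposition 3.3]{AIR}), and the completion statement is arguably the hardest ingredient of the theorem you are proving. You must arrange the logic non-circularly: the completion you invoke should be the module-theoretic Bongartz completion for $\tau$-rigid pairs, which \cite{AIR} proves independently of the bijection, transported through your dictionary --- not a 2-term silting completion that is itself deduced from the bijection. Second, two small gaps in the bookkeeping: presilting requires $\Hom(\mathbf{P},\mathbf{P}[i])=0$ for all $i>0$, so you should note that for 2-term complexes the cases $i\geq 2$ vanish for degree reasons, reducing everything to the $i=1$ vanishing you check; and the normalization step needs the projective-cover splitting argument (an epimorphism from a projective splits as a projective cover plus a summand mapping to zero), which you assert but do not justify. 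Neither caveat affects the correctness of the outline.
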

 
 We call an algebra $A$ is $silted$ \cite{BZ2016} if there is a hereditary algebra $H$ and $\mathbf{ P}\in 2$-$\silt H$ such that $A\cong \End_{\mathcal{D}^b(H)}(\mathbf{ P })$.

\subsection{Cluster-tilted algebras}

 The {\it cluster category} ~$\mathcal{C}_H$ of a hereditary algebra $H$ is  the quotient category $\mathcal{D}^b(H)/F$ where $F$=$\tau^{-1}_{\mathcal{D}}[1]$ and $\tau^{-1}_{\mathcal{D}}$ is the inverse of the Auslander–Reiten translation in $\mathcal{D}^b (H)$.  The space of morphisms from $\tilde{X}$  to $\tilde{Y}$ in $\mathcal{C}_H$  is given by $\Hom_{\mathcal{C}_H}(\tilde{X},\tilde{Y}) $= $\oplus_{i\in \mathbb{Z}}\Hom_{\mathcal{D}^b(H)}(X, F^iY)$.  
It is shown that $\mathcal{C}_H$ is a triangulated category \cite{K2005}.
An object $\tilde{T}\in \mathcal{C}_H$ is called  $tilting$ if $\Ext^1_{\mathcal{C}_H} (\tilde{T}, \tilde{T})$ = $0$  and the number of isomorphism classes of indecomposable summands of $\tilde{T}$ equals $|H|$. The algebra of endomorphisms
$C$ = $\End_{\mathcal{C}_H} (T)$ is called  cluster-tilted \cite{BMR}.  It is proved that the relation-extension of every tilted algebra is cluster-tiled , and every cluster-tilted algebra is of this form in \cite{ABS}.

\section{Main results}

In this section, we prove our main results and give an example to illustrate our results.

 \begin{definition}
 We call a silted algebra $A$ with respect to  $(T, P)$ for some hereditary algebra $H$ if there exists $\mathbf{ P}\in 2$-$\silt H$ which corresponding to $(T,P)\in s\tau$-$\tilt H$ such that $A\cong \End_{\mathcal{D}^b(H)}(\mathbf{ P })$. 
  \end{definition}
  
\begin{proposition}\label{3.2}
 Let $A$ be a silted algebra with respect to  $(T, P)$. Then $A$ is a triangular matrix algebra $\left(\begin{array}{cc} B & 0 \\
M & H_1\end{array}\right)$ where $B$ is a tilted algebra, $H_1$ is a hereditary algebra and $M$ is a $H_1$-$B$-bimodule.
  \end{proposition}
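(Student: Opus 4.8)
The plan is to decompose the $2$-term silting complex $\mathbf{P}$ according to the support $\tau$-tilting pair $(T,P)$ and then read off the endomorphism algebra as a triangular matrix. By Theorem 2.7, the bijection sends $(T,P)$ to a complex of the form $\mathbf{P}=(P_1\oplus P\to P_0)$, where $P_1\to P_0$ is a minimal projective presentation of $T$. First I would split $\mathbf{P}$ into two pieces: the subcomplex $\mathbf{P}_T=(P_1\to P_0)$ concentrated so as to realize $T$ in degree $0$, and the stalk complex $\mathbf{P}_P=(P\to 0)$ coming from the projective part sitting in degree $-1$. Then $\mathbf{P}=\mathbf{P}_T\oplus\mathbf{P}_P$ and
\[
A\cong\End_{\mathcal{D}^b(H)}(\mathbf{P})
=\begin{pmatrix}
\End(\mathbf{P}_T) & \Hom(\mathbf{P}_P,\mathbf{P}_T)\\
\Hom(\mathbf{P}_T,\mathbf{P}_P) & \End(\mathbf{P}_P)
\end{pmatrix}.
\]
The goal is to identify the four corners: to show $\End(\mathbf{P}_T)$ is a tilted algebra $B$, that $\End(\mathbf{P}_P)$ is a hereditary algebra $H_1$, that one of the off-diagonal Hom-spaces vanishes (making the algebra triangular), and that the surviving corner is an $H_1$-$B$-bimodule $M$.

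Next I would compute each corner. Since $\mathbf{P}_T$ is the minimal projective presentation of the $\tau$-tilting module $T$, it is a $2$-term \emph{tilting} complex over the relevant subalgebra, and by the classical correspondence (using Theorem 2.2) its endomorphism algebra in $\mathcal{D}^b(H)$ is isomorphic to $\End_H(T)$, which is a tilted algebra; this gives $B=\End(\mathbf{P}_T)$. For the stalk complex $\mathbf{P}_P$ concentrated in a single degree, $\End(\mathbf{P}_P)\cong\End_H(P)$; since $P$ is projective over the hereditary algebra $H$, I expect $H_1:=\End_H(P)$ to inherit hereditariness (it is the endomorphism algebra of a projective, hence a corner algebra $eHe$ up to Morita equivalence, which stays hereditary). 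For triangularity, the key point is a vanishing of morphisms in the derived category between the two stalk/presentation pieces in one direction: because $\mathbf{P}_P$ lives in degree $-1$ and $\mathbf{P}_T$ is a presentation with cohomology $T$ in degree $0$, one of $\Hom_{\mathcal{D}^b(H)}(\mathbf{P}_T,\mathbf{P}_P)$ or $\Hom_{\mathcal{D}^b(H)}(\mathbf{P}_P,\mathbf{P}_T)$ should vanish by a degree/support argument, leaving a single off-diagonal term $M$ which automatically carries the $H_1$-$B$-bimodule structure coming from composition.

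The main obstacle I anticipate is pinning down precisely which off-diagonal Hom-space vanishes and verifying that the non-vanishing one is genuinely nonzero in general, since this rests on the placement of $\mathbf{P}_P$ in degree $-1$ relative to the presentation $\mathbf{P}_T$ and on the $\tau$-rigidity condition $\Hom_H(T,M)=0$ from Definition 2.5 together with the silting orthogonality $\Hom_{\mathcal{K}^b(\proj H)}(\mathbf{P},\mathbf{P}[i])=0$ for $i>0$. Concretely, one must translate the silting no-positive-self-extension condition into the statement that morphisms from the presentation part to the degree-$(-1)$ stalk (or vice versa) vanish; the cleanest route is to compute these Hom-spaces as homotopy classes of chain maps and check directly that any such map from $\mathbf{P}_T$ to $\mathbf{P}_P$ (which would have to land in degree $-1$) is null-homotopic, using that $P_1\to P_0$ is a \emph{minimal} presentation. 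Once the vanishing is established, identifying $B$ as tilted via Theorem 2.2 and confirming hereditariness of $H_1$ are the two remaining verifications, both of which follow from standard properties of endomorphism algebras of tilting complexes and of projectives over hereditary algebras.
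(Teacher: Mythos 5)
Your overall strategy is the same as the paper's: split the $2$-term silting complex into the piece $\mathbf{P}_T=(P_1\to P_0)$ presenting $T$ and the piece $\mathbf{P}_P=P[1]$, write $\End_{\mathcal{D}^b(H)}(\mathbf{P})$ as a $2\times 2$ matrix, and identify the corners (the paper does this after first replacing $\mathbf{P}$ by $T\oplus P[1]$ in $\mathcal{D}^b(H)$, which is legitimate because over a hereditary algebra a complex is the direct sum of its shifted cohomologies). However, your plan for the triangularity step contains a genuine error: you commit to proving that $\Hom_{\mathcal{D}^b(H)}(\mathbf{P}_T,\mathbf{P}_P)$ vanishes by a null-homotopy argument using minimality. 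This Hom-space does \emph{not} vanish in general: computing homotopy classes of chain maps from $(P_1\to P_0)$ to $(P\to 0)$ gives exactly
\[
\Coker\bigl(\Hom_H(P_0,P)\to \Hom_H(P_1,P)\bigr)\;\cong\;\Ext^1_H(T,P),
\]
which is precisely the bimodule $M$, and $M\neq 0$ whenever the silted algebra is connected and non-tilted (e.g.\ in Example 3.8 of the paper). No homotopy argument can kill it. The space that vanishes is the \emph{other} corner, $\Hom_{\mathcal{D}^b(H)}(\mathbf{P}_P,\mathbf{P}_T)\cong\Hom_{\mathcal{D}^b(H)}(P[1],T)\cong\Ext^{-1}_H(P,T)=0$: at the chain level, a chain map is a morphism $f\colon P\to P_1$ whose composite with the differential $d\colon P_1\to P_0$ is zero, and $d$ is injective because $T$ has projective dimension at most one over the hereditary algebra $H$, so $f=0$. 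Note that neither minimality of the presentation, nor the silting orthogonality, nor $\tau$-rigidity is what drives this vanishing; it is a trivial degree argument, exactly as in the paper's proof.

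A secondary gap is your identification of $B=\End_{\mathcal{D}^b(H)}(\mathbf{P}_T)\cong\End_H(T)$ as a tilted algebra. Since $(T,P)$ is only a \emph{support} $\tau$-tilting pair, $T$ has $|H|-|P|$ summands and is not a tilting $H$-module, so "$\End_H(T)$ is tilted'' is not immediate from Theorem 2.2. The paper makes this precise by passing to $H'=H/HeH$ with $eH\cong P$: this quotient (not a subalgebra, as your "relevant subalgebra'' suggests) is again hereditary, $T$ is a genuine tilting $H'$-module, and $\End_H(T)\cong\End_{H'}(T)$ is therefore tilted. The same idempotent $e$ gives $\End_{\mathcal{D}^b(H)}(P[1])\cong eHe$, hereditary, which matches the part of your argument that is correct.
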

  \begin{proof}
  Suppose that there is a  hereditary algebra $H$ and $\mathbf{ P}\in 2$-$\silt H$ which corresponding to $(T,P)\in s\tau$-$\tilt H$ such that $A\cong End_{\mathcal{D}^b(H)}(\mathbf{ P })$, then we have 
  
  \begin{equation*}
\begin{split}
A&\cong \End_{\mathcal{D}^b(H)}(\mathbf{ P })\\
&\cong  \End_{\mathcal{D}^b(H)}(T\oplus P[1]){\rm( by ~Theorem 2.4)}\\
&\cong  \left(\begin{array}{cc} \End_{\mathcal{D}^b(H)}(T) & \Hom_{\mathcal{D}^b(H)}(P[1],T) \\
\Hom_{\mathcal{D}^b(H)}(T,P[1]) & \End_{\mathcal{D}^b(H)}(P[1])\end{array}\right)
\end{split}
\end{equation*}  
 Take $H'$=$H/HeH$, we have $H'$ is a hereditary algebra , where $eH\cong P$.  Therefore,  $T$ is a tilting $H'$-module and   $B=\End_{\mathcal{D}^b(H)}(T)\cong \End_{H}(T)\cong \End_{H'}(T)$ is a tilted algebra. Moreover, $H_1$=$\End_{\mathcal{D}^b(H)}(P[1])\cong \End_H(P)\cong eHe$ is a hereditary algebra. Note that $\Hom_{\mathcal{D}^b(H)}(P[1],T)$=$0$ since $P$ is projective and $M$=$\Hom_{\mathcal{D}^b(H)}(T,P[1])\cong \Ext^1_H(T,P)$ is a   $H_1$-$B$-bimodule,  we have  $A$ is a triangular matrix algebra. 
   \end{proof}

\begin{lemma}
Let $\mathcal{C}_H$ be a cluster category of a hereditary algebra $H$ and  $T \in \mod H$. Then  we have 
$$\End_{\mathcal{C}_H}(\tilde{T},\tilde{T})\cong\End_{\mathcal{D}^b(H)}(T)\ltimes\Hom_{\mathcal{D}^b(H)}(T, FT).$$
where $\ltimes$ stand for the trivial extension.
 \end{lemma}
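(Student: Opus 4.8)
The plan is to compute $\End_{\mathcal{C}_H}(\tilde T)$ directly from the definition of morphisms in the cluster category and then read off the trivial-extension structure from the induced grading. By definition one has $\End_{\mathcal{C}_H}(\tilde T) = \bigoplus_{i\in\mathbb{Z}} \Hom_{\mathcal{D}^b(H)}(T, F^iT)$, so the statement reduces to two tasks: identifying which summands survive, and understanding how composition interacts with the index $i$.

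First I would show that $\Hom_{\mathcal{D}^b(H)}(T, F^iT) = 0$ for every $i \notin \{0,1\}$. The key inputs are that $H$ is hereditary — so $\Ext^j_H$ vanishes for $j \geq 2$ and every object of $\mathcal{D}^b(H)$ splits as the direct sum of its shifted cohomologies — together with the behaviour of the derived Auslander–Reiten translation $\tau_{\mathcal{D}}$, which acts on modules as the module-theoretic $\tau^{\mp 1}$ on the non-(in)jective part and moves indecomposable injective (resp. projective) summands one cohomological degree, sending $I \mapsto P_I[1]$ under $\tau_{\mathcal{D}}^{-1}$ (resp. $P \mapsto I_P[-1]$ under $\tau_{\mathcal{D}}$). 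Writing $F^iT = \tau_{\mathcal{D}}^{-i}T[i]$ and decomposing $\tau_{\mathcal{D}}^{-i}T$ into shifted cohomologies, one checks that for $i \geq 2$ every cohomology of $F^iT$ sits in a degree that forces the relevant $\Hom$ to be an $\Ext^{\geq 2}_H$, hence zero; a symmetric degree count using $\tau_{\mathcal{D}}^{m}$ disposes of the case $i < 0$. This step — tracking cohomological degrees across the powers of $\tau_{\mathcal{D}}$ — is the main obstacle and the only place where heredity is genuinely used.

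For the two surviving summands, $i=0$ gives $\Hom_{\mathcal{D}^b(H)}(T,T) = \End_H(T) = \End_{\mathcal{D}^b(H)}(T)$, while $i=1$ gives the bimodule $\Hom_{\mathcal{D}^b(H)}(T, FT)$; a byproduct of the degree count above is the identification $\Hom_{\mathcal{D}^b(H)}(T,FT)\cong \Ext^1_H(T,\tau^{-1}_HT)$, which is exactly the bimodule appearing in Theorem 1.1. Thus as a $k$-module $\End_{\mathcal{C}_H}(\tilde T) = \End_{\mathcal{D}^b(H)}(T) \oplus \Hom_{\mathcal{D}^b(H)}(T,FT)$.

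Finally I would pin down the multiplication. Under the identification $\Hom_{\mathcal{C}_H}(\tilde T,\tilde T) = \bigoplus_i \Hom_{\mathcal{D}^b(H)}(T, F^iT)$, the composite of $f \in \Hom_{\mathcal{D}^b(H)}(T, F^iT)$ with $g \in \Hom_{\mathcal{D}^b(H)}(T, F^jT)$ is $F^i(g)\circ f \in \Hom_{\mathcal{D}^b(H)}(T, F^{i+j}T)$, so composition is compatible with the index, making $\End_{\mathcal{C}_H}(\tilde T)$ an $\mathbb{N}$-graded ring with degree-$0$ part $\End_{\mathcal{D}^b(H)}(T)$ and degree-$1$ part $\Hom_{\mathcal{D}^b(H)}(T, FT)$. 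Since the product of two degree-$1$ elements lands in $\Hom_{\mathcal{D}^b(H)}(T, F^2T)$, which vanishes by the computation above, the degree-$1$ part is a square-zero ideal carrying the evident $\End_{\mathcal{D}^b(H)}(T)$-bimodule action given by pre- and post-composition. This is precisely the definition of the trivial extension, yielding $\End_{\mathcal{C}_H}(\tilde T)\cong \End_{\mathcal{D}^b(H)}(T)\ltimes \Hom_{\mathcal{D}^b(H)}(T, FT)$.
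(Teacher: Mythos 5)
Your proof is correct, but it is worth noting that the paper does not actually prove this lemma at all: its entire ``proof'' is a citation to Lemma 3.3 of Assem--Br\"ustle--Schiffler (\emph{Cluster-tilted algebras as trivial extensions}). What you have written is, in substance, a reconstruction of the argument behind that cited lemma: the decomposition $\End_{\mathcal{C}_H}(\tilde T)=\bigoplus_{i}\Hom_{\mathcal{D}^b(H)}(T,F^iT)$, the vanishing of all summands with $i\notin\{0,1\}$ via the degree count (each application of $\tau_{\mathcal{D}}^{-1}$ raises cohomological degree by at most one, each application of $\tau_{\mathcal{D}}$ lowers it by at most one, and over a hereditary algebra $\Hom_{\mathcal{D}^b(H)}(M,N[l])=\Ext^l_H(M,N)$ vanishes unless $l\in\{0,1\}$), and the observation that composition in the orbit category ($g\cdot f = F^i(g)\circ f$) respects the grading, so the degree-one part is a square-zero ideal and the algebra is the trivial extension. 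All three steps are sound; the only minor gloss is that the left action of $\End_{\mathcal{D}^b(H)}(T)$ on $\Hom_{\mathcal{D}^b(H)}(T,FT)$ is post-composition after applying $F$, but that is the evident bimodule structure and does not affect correctness. Compared with the paper, your route costs a page of degree-bookkeeping but buys self-containedness, and it produces as a byproduct the identification $\Hom_{\mathcal{D}^b(H)}(T,FT)\cong\Ext^1_H(T,\tau^{-1}_HT)$, which the paper uses without comment in the proof of its Theorem 3.4.
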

\begin{proof}
It follows from {\rm \cite [Lemma 3.3]{ABS}}.
\end{proof}

\begin{theorem}
Let $A$=$\left(\begin{array}{cc} B & 0 \\
M & H_1\end{array}\right)$ be a silted algebra with respect to  $(T, P)$ for some hereditary algebra $H$. Then the matrix algebra $\left(\begin{array}{cc} B\ltimes\Ext^1_H(T,\tau^{-1}_HT) & \Hom_H(P, \tau^{-1}_H T) \\ M & H_1\end{array}\right)$  is a cluster-tilted algebra.
\end{theorem}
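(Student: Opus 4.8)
The plan is to realize the target matrix algebra as the endomorphism algebra of a tilting object in the cluster category $\mathcal{C}_H$, namely the image $\tilde{\mathbf{P}}$ of the $2$-term silting complex $\mathbf{P}$ under the canonical projection $\mathcal{D}^b(H)\to\mathcal{C}_H$. By Theorem 2.4 (and as used in the proof of Proposition \ref{3.2}) we may take $\mathbf{P}\cong T\oplus P[1]$ in $\mathcal{D}^b(H)$, so the object in question is $\tilde{\mathbf{P}}=\tilde{T}\oplus\widetilde{P[1]}$. Once $\tilde{\mathbf{P}}$ is known to be a tilting object in $\mathcal{C}_H$, the algebra $C:=\End_{\mathcal{C}_H}(\tilde{\mathbf{P}})$ is cluster-tilted by definition, and it remains only to identify $C$ with the asserted matrix algebra.

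First I would verify that $\tilde{\mathbf{P}}$ is a tilting object. The number of pairwise non-isomorphic indecomposable summands is $|T|+|P|=|H|$, using that $(T,P)$ is a support $\tau$-tilting pair and that the module summands $\tilde{T_i}$ and the shifted-projective summands $\widetilde{P_j[1]}$ lie in disjoint parts of a fundamental domain for $\mathcal{C}_H$, hence are pairwise non-isomorphic. For the rigidity $\Ext^1_{\mathcal{C}_H}(\tilde{\mathbf{P}},\tilde{\mathbf{P}})=\Hom_{\mathcal{C}_H}(\tilde{\mathbf{P}},\tilde{\mathbf{P}}[1])=0$ I would expand via the morphism formula $\Hom_{\mathcal{C}_H}(\tilde{X},\tilde{Y})=\bigoplus_i\Hom_{\mathcal{D}^b(H)}(X,F^i Y)$. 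The $i=0$ summand is $\Hom_{\mathcal{D}^b(H)}(\mathbf{P},\mathbf{P}[1])$, which vanishes because $\mathbf{P}$ is silting; the $i=1$ summand, after writing $F(\mathbf{P}[1])=\tau^{-1}_{\mathcal{D}}\mathbf{P}[2]$ and splitting $\mathbf{P}=T\oplus P[1]$, reduces to several $\Ext^{\geq 2}_H$ terms and to $\Ext^1_H(P,\tau^{-1}_HT)$, all of which vanish by hereditariness of $H$ together with $\tau_H P=0$; the remaining indices contribute nothing for degree reasons. This verification is the main obstacle, since it is exactly the bridge from the silting world to the cluster category.

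Next I would compute $C=\End_{\mathcal{C}_H}(\tilde{T}\oplus\widetilde{P[1]})$ entrywise, viewing it as the $2\times 2$ matrix with $(1,1)$-corner $\End_{\mathcal{C}_H}(\tilde{T})$, $(2,2)$-corner $\End_{\mathcal{C}_H}(\widetilde{P[1]})$ and off-diagonal $\Hom$-spaces, the multiplication being composition. Lemma 3.5 gives $\End_{\mathcal{C}_H}(\tilde{T})\cong\End_H(T)\ltimes\Hom_{\mathcal{D}^b(H)}(T,FT)=B\ltimes\Ext^1_H(T,\tau^{-1}_HT)$, including the trivial-extension multiplication. For the remaining three entries I apply the morphism formula again: only the indices $i=0,1$ can contribute, and hereditariness together with $\tau_H P=0$ kills all but one term in each case, yielding $\Hom_{\mathcal{C}_H}(\widetilde{P[1]},\tilde{T})=\Hom_H(P,\tau^{-1}_H T)$ (from $i=1$), $\Hom_{\mathcal{C}_H}(\tilde{T},\widetilde{P[1]})=\Ext^1_H(T,P)=M$ (from $i=0$), and $\End_{\mathcal{C}_H}(\widetilde{P[1]})\cong\End_H(P)=H_1$ (from $i=0$). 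Assembling these identifies $C$ with the asserted matrix algebra as an algebra, the bimodule actions and matrix multiplication being precisely the composition in $\mathcal{C}_H$, which completes the proof.
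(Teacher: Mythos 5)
Your overall strategy is exactly the paper's: realize the target algebra as $\End_{\mathcal{C}_H}(\tilde{T}\oplus\tilde{P}[1])$, identify the $(1,1)$-corner via the trivial-extension lemma (the paper's Lemma 3.3, from \cite{ABS}), and compute the other three entries with the orbit-category $\Hom$ formula. Your entrywise computations agree with the paper's and are correct. The only real difference is how the object $\tilde{T}\oplus\tilde{P}[1]$ gets certified as a cluster-tilting object: the paper simply asserts this (it is the known Adachi--Iyama--Reiten bijection \cite{AIR} between support $\tau$-tilting $H$-modules and cluster-tilting objects in $\mathcal{C}_H$), whereas you attempt a direct verification --- and there your argument has a genuine gap.

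The gap is in the rigidity check. You handle the summand $i=0$ (vanishes since $\mathbf{P}$ is silting) and $i=1$ (vanishes by hereditariness and projectivity of $P$), and then claim that ``the remaining indices contribute nothing for degree reasons.'' This is false for $i=-1$. That summand is $\Hom_{\mathcal{D}^b(H)}\bigl(\mathbf{P},F^{-1}(\mathbf{P}[1])\bigr)=\Hom_{\mathcal{D}^b(H)}(\mathbf{P},\tau_{\mathcal{D}}\mathbf{P})$, and with $\mathbf{P}\cong T\oplus P[1]$ it contains the degree-zero pieces $\Hom_H(T,\tau_H T)$ and $\Hom_{\mathcal{D}^b(H)}(T,\nu P)\cong D\Hom_H(P,T)$, neither of which is killed by any shift argument. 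Their vanishing is precisely the $\tau$-rigidity of $T$ and the support condition $\Hom_H(P,T)=0$ --- i.e.\ exactly the content of $(T,P)$ being a support $\tau$-tilting pair, which is the ``bridge from the silting world to the cluster category'' that you yourself flag as the main obstacle. The repair is short: by Serre duality in $\mathcal{D}^b(H)$ (where $\nu=\tau_{\mathcal{D}}[1]$) one has $\Hom_{\mathcal{D}^b(H)}(\mathbf{P},\tau_{\mathcal{D}}\mathbf{P})=\Hom_{\mathcal{D}^b(H)}(\mathbf{P},\nu\mathbf{P}[-1])\cong D\Hom_{\mathcal{D}^b(H)}(\mathbf{P},\mathbf{P}[1])$, so this term also vanishes because $\mathbf{P}$ is silting; alternatively, simply invoke the bijection of \cite{AIR} between support $\tau$-tilting pairs and cluster-tilting objects, as the paper implicitly does. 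With either repair your proof is complete and coincides with the paper's.
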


  \begin{proof}
  
  Let $A$=$\left(\begin{array}{cc} B & 0 \\
M & H_1\end{array}\right)$ be a silted algebra with respect to  $(T, P)$ for some hereditary algebra $H$.  Then $\tilde{T}\oplus \tilde{P}[1]$ is a cluster-tilting object in $\mathcal{C}_H$. 
For any two $H$-modules $X$ and $Y$, we have $\Hom_{\mathcal{D}^b(H)}(X,Y[i])= 0$ for all $i \geqslant2$ since $H$ is  hereditary. Hence, we have  
  \begin{equation*}
\begin{split}
\End_{\mathcal{C}_H}(\tilde{T}\oplus \tilde{P}[1])
&\cong  \left(\begin{array}{cc} \End_{\mathcal{C}_H}(\tilde{T}) & \Hom_{\mathcal{C}_H}(\tilde{P}[1],\tilde{T}) \\
\Hom_{\mathcal{C}_H}(\tilde{T},\tilde{P}[1]) & \End_{\mathcal{C}_H}(\tilde{P}[1])\end{array}\right)\\
&\cong  \left(\begin{array}{cc} \End_{\mathcal{D}^b(H)}(T)\ltimes\Hom_{\mathcal{D}^b(H)}(T, FT) & \Hom_{\mathcal{C}_H}(\tilde{P}[1],\tilde{T}) \\
\Hom_{\mathcal{C}_H}(\tilde{T},\tilde{P}[1]) & \End_{\mathcal{C}_H}(\tilde{P}[1])\end{array}\right){\rm (by ~Lemma 3.3)}\\
&\cong  \left(\begin{array}{cc} B\ltimes\Ext^1_H(T,\tau^{-1}_HT) & \Hom_{\mathcal{C}_H}(\tilde{P}[1],\tilde{T}) \\
\Hom_{\mathcal{C}_H}(\tilde{T},\tilde{P}[1]) & \End_{\mathcal{C}_H}(\tilde{P}[1])\end{array}\right)\\
&\cong  \left(\begin{array}{cc} B\ltimes\Ext^1_H(T,\tau^{-1}_H) & \Hom_{\mathcal{D}^b(H)}(P[1],FT) \\
\Hom_{\mathcal{D}^b(H)}(T,P[1])  & \End_{\mathcal{D}^b(H)}(P[1])\end{array}\right)\\
&\cong  \left(\begin{array}{cc} B\ltimes\Ext^1_H(T,\tau^{-1}_HT) & \Hom_H(P,\tau^{-1}_HT) \\
M & H_1\end{array}\right)\\
\end{split}
\end{equation*}  
which is a cluster-tilted algebra.
   \end{proof}
  
As a consequence, we have the following result.

\begin{corollary}
Let $A$ be a silted algebra with respect to  $(T, P)$ for some hereditary algebra $H$. If $T$ is injective, then $A$ is hereditary. In particular,  a tilted algebra which induced by a injective tilting module is hereditary. 
\end{corollary}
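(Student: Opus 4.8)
The plan is to show that, once $T$ is injective, the cluster-tilted algebra produced by Theorem 3.4 collapses onto $A$ itself, and then to upgrade ``cluster-tilted'' to ``hereditary'' using finiteness of the global dimension. First I would record the homological input: since $H$ is hereditary and $T$ is an injective $H$-module, the inverse Auslander--Reiten translate annihilates it, i.e. $\tau^{-1}_HT=0$. Substituting this into Theorem 3.4, the bimodule $\Ext^1_H(T,\tau^{-1}_HT)$ and the corner $\Hom_H(P,\tau^{-1}_HT)$ both vanish, so the associated cluster-tilted algebra is
\[
\left(\begin{array}{cc} B\ltimes\Ext^1_H(T,\tau^{-1}_HT) & \Hom_H(P,\tau^{-1}_HT) \\ M & H_1\end{array}\right)
=\left(\begin{array}{cc} B & 0 \\ M & H_1\end{array}\right)=A .
\]
Hence $A$ is itself a cluster-tilted algebra.

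Next I would verify that $A$ has finite global dimension. This is immediate from Proposition \ref{3.2}: $A$ is the triangular matrix algebra whose diagonal blocks are the tilted algebra $B$ and the hereditary algebra $H_1$, both of finite global dimension, and a triangular matrix algebra assembled from blocks of finite global dimension (with $M$ finitely generated) again has finite global dimension. Equivalently, one may invoke that $A$ is silted, hence shod, and that shod algebras have global dimension at most three; either route gives $\operatorname{gldim}A<\infty$.

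The decisive step is to combine the two facts just established: a cluster-tilted algebra of finite global dimension is hereditary. Since cluster-tilted algebras are $1$-Gorenstein, finiteness of the global dimension forces it to be at most one, so $A$ is hereditary. The ``in particular'' clause is the special case $P=0$: an injective tilting $H$-module $T$ again satisfies $\tau^{-1}_HT=0$, so $A=B=\End_H(T)$ is hereditary by the same argument (indeed here $T$ is the direct sum of all indecomposable injectives, so $T\cong DH$ and $B\cong\End_H(DH)$ is isomorphic to $H$, up to passing to the opposite algebra). I expect the main obstacle to be precisely this last implication: supplying the Gorenstein property of cluster-tilted algebras that turns ``cluster-tilted of finite global dimension'' into ``hereditary'' is the one ingredient not already furnished by the preceding sections, and everything else is a direct specialization of Theorem 3.4.
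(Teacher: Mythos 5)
Your proposal is correct and follows essentially the same route as the paper: $\tau^{-1}_HT=0$ collapses the algebra of Theorem 3.4 onto $A$ itself, so $A$ is cluster-tilted; then finiteness of its global dimension (the paper bounds it by three, you give two equivalent justifications) combined with the $1$-Gorenstein property of cluster-tilted algebras forces global dimension at most one. Your write-up is in fact slightly more explicit than the paper's, which leaves the collapse of the matrix algebra and the source of the finite global dimension bound implicit.
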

\begin{proof}
Since $T$ is injective, we have $\tau^{-1}_HT=0$. By  Theorem 3.4, $A$ is a cluster-tilted algebra whose global dimension is at most three. Note that every cluster-tilted algebra is 1-Gorenstein \cite{KR}. Since the projective dimension of every module over a  1-Gorenstein algebra is at most one or infinite,  we get the global dimension  of $A$ is at most one, and so $A$ is  hereditary.
\end{proof}

\begin{theorem}
Let $A$=$\left(\begin{array}{cc} B & 0 \\
M & H_1\end{array}\right)$ be a silted algebra with respect to  $(T, P)$ for some hereditary algebra $H$. If $\Hom_H(P,\tau^{-1}_HT)=0$, then the triangular matrix algebra $\left(\begin{array}{cc} B\ltimes\Ext^2_B(DB,B) & 0 \\
M & H_1\end{array}\right)$ is a cluster-tilted algebra.
\end{theorem}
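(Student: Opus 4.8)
The plan is to derive Theorem 3.6 as a direct consequence of Theorem 3.4, by showing that under the hypothesis $\Hom_H(P,\tau^{-1}_H T)=0$ the off-diagonal entry in the cluster-tilted algebra produced by Theorem 3.4 vanishes, and that the corner bimodule $B\ltimes\Ext^1_H(T,\tau^{-1}_H T)$ coincides with the relation-extension corner $B\ltimes\Ext^2_B(DB,B)$. By Theorem 3.4 the matrix algebra
\[
\left(\begin{array}{cc} B\ltimes\Ext^1_H(T,\tau^{-1}_H T) & \Hom_H(P,\tau^{-1}_H T) \\ M & H_1\end{array}\right)
\]
is cluster-tilted. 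The hypothesis $\Hom_H(P,\tau^{-1}_H T)=0$ kills the top-right entry, so this algebra is already \emph{triangular}, namely $\left(\begin{smallmatrix} B\ltimes\Ext^1_H(T,\tau^{-1}_H T) & 0 \\ M & H_1\end{smallmatrix}\right)$. Thus the only remaining task is the identification of the two bimodules appearing in the corner.

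The key step is therefore to prove the isomorphism of $B$-$B$-bimodules
\[
\Ext^1_H(T,\tau^{-1}_H T)\;\cong\;\Ext^2_B(DB,B).
\]
Here I would invoke Theorem 2.1 (Happel's derived equivalence): the functor $\mathrm{RHom}_H(T,-)\colon\mathcal{D}^b(H)\to\mathcal{D}^b(B)$ is a triangle equivalence sending $T$ to $B$ and commuting with the Auslander--Reiten translations and with the shift. Since $\Ext^1_H(T,\tau^{-1}_H T)\cong\Hom_{\mathcal{D}^b(H)}(T,(\tau^{-1}_H T)[1])\cong\Hom_{\mathcal{D}^b(H)}(T,\tau^{-1}_{\mathcal{D}}T[1])$, applying the equivalence turns this into $\Hom_{\mathcal{D}^b(B)}(B,\tau^{-1}_{\mathcal{D}}B[1])$, where $\tau^{-1}_{\mathcal{D}}$ and $[1]$ are now the operators in $\mathcal{D}^b(B)$. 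On a hereditary algebra the AR translation in the derived category interacts with $D$ and the shift through the Serre functor $\nu=D\mathrm{RHom}_B(-,B)$, and one has $\tau_{\mathcal{D}}=\nu[-1]$, so $\tau^{-1}_{\mathcal{D}}B[1]=\nu^{-1}B[2]$; computing $\Hom_{\mathcal{D}^b(B)}(B,\nu^{-1}B[2])\cong\Hom_{\mathcal{D}^b(B)}(\nu B,B[2])\cong\Ext^2_B(DB,B)$ via Serre duality and $\nu B\cong DB$ then yields the claim. I would make sure all these identifications are carried out as maps of $B$-$B$-bimodules, since the trivial extension depends on the bimodule structure, not merely on the underlying vector space.

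The main obstacle I anticipate is precisely the careful bookkeeping in this last computation: tracking the bimodule structure through the derived equivalence and through Serre duality, and correctly matching the degree shifts so that $\tau^{-1}_H T[1]$ on the $H$-side lands on the $\Ext^2$ term on the $B$-side. One must also confirm that $\Ext^2_B(DB,B)$ is the correct object for the relation-extension, which requires $B$ to have global dimension at most two; this holds because $B$ is tilted. A secondary point to verify is that replacing the corner $B\ltimes\Ext^1_H(T,\tau^{-1}_H T)$ by $B\ltimes\Ext^2_B(DB,B)$ preserves the whole triangular matrix algebra up to isomorphism, i.e. that the left $B$-module structure of $M$ and the bimodule identification are compatible; this is routine once the corner isomorphism is established as an isomorphism of algebras extending the identity on $B$. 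Once these identifications are in place, the triangular matrix algebra $\left(\begin{smallmatrix} B\ltimes\Ext^2_B(DB,B) & 0 \\ M & H_1\end{smallmatrix}\right)$ is isomorphic to the cluster-tilted algebra furnished by Theorem 3.4, completing the proof.
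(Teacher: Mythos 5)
Your overall strategy (invoke Theorem 3.4, use the hypothesis to kill the off-diagonal entry, then identify the corner bimodule) matches the paper's, and your Serre-functor computation at the end agrees with the paper's final steps. But there is a genuine gap in the key step: you invoke Theorem 2.1 to claim that $\mathrm{RHom}_H(T,-)\colon\mathcal{D}^b(H)\to\mathcal{D}^b(B)$ is a triangle equivalence. Theorem 2.1 requires $T$ to be a \emph{tilting} $H$-module, but here $T$ is only the module part of a support $\tau$-tilting pair: $|T|=|H|-|P|$, so when $P\neq 0$ the algebra $B=\End_H(T)$ has strictly fewer simple modules than $H$ and cannot be derived equivalent to $H$ (the rank of the Grothendieck group is a derived invariant). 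Hence $\mathrm{RHom}_H(T,-)$ is not an equivalence, and the step ``applying the equivalence turns this into $\Hom_{\mathcal{D}^b(B)}(B,\tau^{-1}_{\mathcal{D}}B[1])$'' breaks down.

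The missing idea is that the hypothesis $\Hom_H(P,\tau^{-1}_HT)=0$ is not merely there to make the matrix triangular; it is essential for the bimodule identification itself. The paper's proof sets $H'=H/HeH$ with $eH\cong P$ and observes that the hypothesis forces $\tau^{-1}_HT$ to be an $H'$-module, which yields
$\Ext^1_H(T,\tau^{-1}_HT)\cong\Ext^1_{H'}(T,\tau^{-1}_{H'}T)$.
Since $T$ \emph{is} a tilting module over the hereditary algebra $H'$ (this is exactly Proposition 3.2), Happel's theorem now legitimately gives an equivalence $\mathcal{D}^b(H')\to\mathcal{D}^b(B)$ commuting with $\tau$ and the shift, sending $T$ to $B$; from there your computation
$\Hom_{\mathcal{D}^b(B)}(B,\tau^{-1}_{\mathcal{D}}B[1])\cong\Hom_{\mathcal{D}^b(B)}(DB,B[2])\cong\Ext^2_B(DB,B)$
proceeds as you wrote it. Without this reduction to $H'$, the isomorphism $\Ext^1_H(T,\tau^{-1}_HT)\cong\Ext^2_B(DB,B)$ is not available: $\tau^{-1}_HT$ and $\tau^{-1}_{H'}T$ are computed over different algebras and need not agree, which is precisely the content that the hypothesis buys you.
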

\begin{proof}
Take $H'=H/HeH$, we have  $\tau^{-1}_HT$ is a $H'$-module since $\Hom_H(P,\tau^{-1}_HT)=0$  where $eH\cong P$. 
Therefore, we have 
  \begin{equation*}
\begin{split}\Ext^1_H(T,\tau^{-1}_HT)
&\cong\Ext^1_{H'}(T,\tau^{-1}_{H'}T)\\
&\cong\Hom_{\mathcal{D}^b(H')}(T, F'T)\\
&\cong \Hom_{\mathcal{D}^b(B)}(B, F''B){\rm (by~ Lemma 2.1)}\\
&\cong \Hom_{\mathcal{D}^b(B)}(\tau_{\mathcal{D}^b(B)}B[1], B[2])\\
&\cong \Hom_{\mathcal{D}^b(B)}(DB, B[2])\\
&\cong \Ext^2_B(DB, B)\\
\end{split}
\end{equation*}  
 where $F'=\tau^{-1}_{{\mathcal{D}^b(H')}}[1]$  and $F''=\tau^{-1}_{{\mathcal{D}^b(B)}}[1]$  is the functor corresponding to $F'$ in the derived category $\mathcal{D}^b(B)$.  
\end{proof}
Note that a tilted algebra is exactly silted algebra with respect to  $(T, 0)$ for some hereditary algebra $H$, we can easy get the following result.
\begin{corollary}
The relation-extension of every tilted algebra is cluster-tilted.
\end{corollary}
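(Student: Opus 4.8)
The plan is to read off this corollary as the degenerate case $P=0$ of Theorem 3.6. As noted immediately above the statement, a tilted algebra $B=\End_H(T)$ is exactly a silted algebra with respect to the pair $(T,0)$, where $H$ is the hereditary algebra and $T$ the tilting $H$-module defining $B$. First I would specialize the matrix description of Proposition \ref{3.2} to this situation: setting $P=0$ forces $H_1=\End_{\mathcal{D}^b(H)}(P[1])=0$ and $M=\Ext^1_H(T,P)=0$, so the silted algebra $A$ is nothing but $B$ itself.

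Next I would observe that the hypothesis of Theorem 3.6 holds automatically, since $P=0$ gives $\Hom_H(P,\tau^{-1}_HT)=0$ for free. Theorem 3.6 then yields that the triangular matrix algebra $\left(\begin{array}{cc} B\ltimes\Ext^2_B(DB,B) & 0 \\ M & H_1\end{array}\right)$ is cluster-tilted. Substituting $M=0$ and $H_1=0$ collapses this matrix to its single nonzero corner $B\ltimes\Ext^2_B(DB,B)$, which is by definition the relation-extension of the tilted algebra $B$. Thus the relation-extension of $B$ is cluster-tilted, as claimed.

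There is no real obstacle to surmount here: the entire content resides in Theorem 3.6, and the corollary is just that theorem evaluated at $P=0$. The only point needing care is the routine check that the three data $H_1$, $M$, and the hypothesis $\Hom_H(P,\tau^{-1}_HT)=0$ all degenerate as stated when $P=0$, and each of these is immediate from the relevant definition.
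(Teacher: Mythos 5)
Your proposal is correct and is exactly the paper's intended argument: the paper derives this corollary by noting that a tilted algebra is precisely a silted algebra with respect to $(T,0)$ and then invoking Theorem 3.6, which is what you do. Your write-up is in fact more careful than the paper's, since you explicitly verify that $H_1$, $M$, and the hypothesis $\Hom_H(P,\tau^{-1}_HT)=0$ all degenerate appropriately when $P=0$.
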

\begin{example}
Let $H$ be a hereditary algebra given by the following quiver,
$$\xymatrix@C=15pt{1\ar[rd]&&\\
&3\ar[r]&4\\
2\ar[ru]&&}$$
The support $\tau$-tilting pair $(T,P)=(P_4\oplus P_1\oplus S_1, P_2)$ corresponding to the 2-term silting complex $0\to P_4\oplus0\to P_1\oplus P_3\to P_1\oplus P_2\to 0$ which induced a silted algebra given as follows,
$$\xymatrix@C=15pt{1&2\ar_{\gamma}[l]&3\ar_{\beta}[l]&4\ar_{\alpha}[l]}$$
with the relations $\alpha\beta=0$ and $\beta\gamma=0$. 
Note that $$dim_k\Ext^1_H(T,\tau^{-1}_HT)=2,  ~~dim_k\Hom_H(P, \tau^{-1}_HT)=1.$$
{\rm(}in fact, $dim_k\Ext^1_H(S_1,\tau^{-1}_HP_4)=1,~~dim_k\Ext^1_H(S_1,\tau^{-1}_HP_1)=1,  ~~dim_k\Hom_H(P_2, \tau^{-1}_HP_1)=1$.{\rm)}
By Theorem 3.4,  we can construct a cluster-tilted algebra given by the following quiver,
$$\xymatrix@C=15pt{&1\ar_{\delta}[ld]&\\
3\ar^{\beta}[rr]&&2\ar_{\gamma}[lu]\ar^{\epsilon}[ld]\\
&4\ar^{\alpha}[lu]&}$$
with relations $\gamma\delta=\epsilon\alpha, \alpha\beta=0, \beta\gamma=0,\beta\epsilon=0, \delta\beta=0$.

\end{example}

\subsection*{Acknowledgements}

The author would like to thank  professor Ralf Schiffler and professor Zhaoyong Huang for helpful discussions. He also  thanks  the referee for the useful and detailed suggestions.  This work was partially supported by the National natural Science Foundation of China (No. 11571164).

\vspace{0.5cm}
\end{document}